\newtheorem{thm}{Theorem}[section]
\newtheorem{lem}[thm]{Lemma}
\newtheorem{cor}[thm]{Corollary}
\newtheorem{prop}[thm]{Proposition}
\theoremstyle{definition}
\newtheorem{rem}[thm]{Remark}
\newtheorem{defn}[thm]{Definition}
\newtheorem{ex}[thm]{Example}
\newtheorem*{acknowledgments*}{Acknowledgments}
\numberwithin{equation}{section}
\theoremstyle{remark}
\newtheorem{notation}[thm]{Notation}
\newcommand\bk[1]{\left\langle #1 \right\rangle}
\mathchardef\ordinarycolon\mathcode`\: 
\def\vcentcolon{\mathrel{\mathop\ordinarycolon}} 
\providecommand*\coloneqq{\mathrel{\vcentcolon\mkern-1.2mu}=}
\def\Z{{\mathbb Z}} 
\def\R{{\mathbb R}} 
\def\C{{\mathbb C}} 
\def\matrix{{\mathbb{M}}}
\def\K{{K}}
\def\HE{{H\!E}}
\def\alg{{\mathrm{alg}}}
\def\A{{\mathcal A}}
\def\H{{\mathcal H}}
\def\D{\mathcal D}
\def\AHD{{(\A, \H, \D)}}
\def\ch{\mathrm{Ch}}
\def\pch{\mathrm{Ch}^\mathrm{pert}}
\newcommand\JLO[2]{\bk{#1 \mid #2}}
\def\Ind{{\mathrm{Ind}}}
\def\dom{{\mathrm{dom}}}
\def\ker{{\mathrm{ker}}}
\def\tensor{\widehat{\otimes }}
\def\str{{\mathrm{Str}}}
\newcommand\half[1]{\frac{#1}{2}}
\begin{document}

\title{Multiplicativity of the JLO-character}
\author{Otgonbayar Uuye}
\date{January 11, 2010}
\address{
Department of Mathematical Sciences\\
University of Copenhagen\\
Universitetsparken 5\\
DK-2100 Copenhagen E\\
Denmark}
\email{otogo@math.ku.dk}
\urladdr{http://www.math.ku.dk/~otogo}
\keywords{spectral triple, JLO character, exterior product, $A_{\infty}$-structure}
\subjclass[2000]{Primary (19K56); Secondary (46L87)}
\maketitle

\begin{abstract}
We prove that the Jaffe-Lesniewski-Osterwalder character is compatible with the $A_{\infty}$-structure of Getzler and Jones.
\end{abstract}

\setcounter{section}{-1}
\section{Introduction}

Let $M$ be a closed, smooth manifold of even dimension. An order-one, odd, elliptic, pseudodifferential operator $D$ on $M$ gives rise to a $K$-homology class $[D] \in K_{0}(M)$. The celebrated Atiyah-Singer index theorem \cite{AS:1963, AS:1968,AS:1968b} computes the homological Chern character of $[D]$, in terms of the cohomological Chern character of the symbol class of $D$. 
There are many proofs known to date. The original one, as explained in \cite{P:1965}, proceeds as follows. The {\em bordsim invariance} and {\em multiplicativity} of the index map reduces the problem to the computation of the ``index genus'' $\Ind: \Omega^{\mathrm{SO}}_{\bullet}(KU) \to \Z$. Then deep results of Thom \cite{T:1954} and Conner-Floyd \cite{CF:1964} further reduces the problem, in effect, to the Hirzebruch signature theorem and the Bott periodicity theorem. The proof in \cite{AS:1968, AS:1968b} bypasses  the bordism computation altogether; here the {\em strict multiplicativity} (B3$'$) of the index map plays a crucial role.

In this paper, we study the multiplicative property\footnote{We do not yet have a good theory of bordism in noncommutative geometry; although preliminary results were announced in \href{http://www.math.uni-bonn.de/people/lesch/homepagenew/Papers/2009-LesMosPfl-Oberwolfach.pdf}{Matthias-Moscovici-Pflaum}.} of the index map in noncommutative geometry. In our setting, $\theta$-summable spectral triples play the role of $K$-homology classes and the JLO character replaces the homological Chern character. There are many other characters, especially if the spectral triple is finitely summable, but it seems that the JLO character is most compatible with the  exterior product operation. We show that the JLO character is compatible with the $A_{\infty}$-exterior product structure on entire chains (Theorem \ref{thm: A-inf and JLO}). The main idea goes back to \cite{GJP:1991,BG:1994}.

As a corollary, we construct a perturbation of the JLO character that is multiplicative at the chain level (Corollary~\ref{cor: multiplicativity of perturbed JLO}). Application to the index theory of transversally elliptic operators will appear elsewhere.

\begin{acknowledgments*} This paper is essentially contained in my thesis. I would like to thank my advisor Nigel Higson for his constant encouragement and guidance. I would also like to thank the NCG group at University of Copenhagen for support.
\end{acknowledgments*}
  
\section{Spectral triples}

\begin{defn}\label{defn spectral triple} A {\em spectral triple} $\AHD$  consists of a unital Banach algebra $\A$, a graded Hilbert space $\H$, equipped with a continuous, even representation of $\A$ and a densely defined, self-adjoint, odd operator $\D$ such that 
\begin{enumerate}
\item for any $a \in \A$, the commutator $[\D, a] \coloneqq \D a - a\D$ is bounded, that is, if $\dom(\D)$ is the domain of $\D$,  then $a \cdot \dom(\D) \subseteq \dom(\D)$ and $[\D, a]: \dom(\D) \to \H$ extends by continuity to a bounded operator on $\H$, and satisfies\footnote{It is enough to require that (\ref{norm req}) is satisfied up to a multiplicative constant.}
	\begin{equation}\label{norm req}
	||a|| + ||[\D, a]|| \le ||a||_{\A},
	\end{equation}
where $|| \cdot ||$ denotes the operator norm, and
\item\label{compact resolvent} the resolvents $(\D \pm i)^{-1}$ are compact.
\end{enumerate}
We say that $\AHD$ is {\em $\theta$-summable} if 
	\begin{enumerate}
\item[(3)]\label{theta} the operator $e^{-t\D^{2}}$ is of trace class for any $t > 0$.
\end{enumerate}
\end{defn}

\begin{rem}\label{rem compact resolvent} The compact resolvent condition (\ref{compact resolvent}) is equivalent to 
\begin{enumerate}
\item[(2')] the operator $e^{-t\D^{2}}$ is compact for any (or some) $t > 0$.
\end{enumerate}
Indeed, let 
	\begin{equation}
	\mathcal C  \coloneqq \{ f \in  C_{0}(\R) \mid \text{$f(\D)$ is compact}\}.
	\end{equation}
Then $\mathcal C$ is a closed ideal in $C_{0}(\R)$ and thus $e^{-tx^{2}} \in C_{0}(\R)$, $t > 0$ belongs to $\mathcal C$ iff  $\mathcal C = C_{0}(\R)$ iff $(x \pm i)^{-1}$ belong to $\mathcal C$.
\end{rem}
 
For practical purposes, it is useful to consider essentially self-adjoint operators. We say that $\AHD$ is a {\em pre-spectral triple} if $\A$ is a normed algebra not necessarily complete and $\D$ is required to be just essentially self-adjoint, in Definition~\ref{defn spectral triple}.
 
\begin{lem} Let $\AHD$ be a pre-spectral triple. Then $(\bar\A, \H, \bar\D)$ is a spectral triple, where $\bar\A$ denote the completion of $\A$, acting on $\H$ by continuous extension and $\bar\D$ denote the closure of $\D$.
\end{lem}
We call $(\bar\A, \H, \bar{\D})$ the {\em closure} of $\AHD$.
\begin{proof} Let $W^{1}$ denote the strong domain of $\bar \D$. We show that $\bar\A$ preserves $W^{1}$. 
Let $a \in \bar\A$ and let $\xi \in W^{1}$. By definition, there exists a sequence $a_{n} \in \A$ converging to $a$ and a sequence $\xi_{m} \in \dom(\D)$ converging to $\xi$ such that $\D\xi_{m}$ converges to some $\eta \in \H$ as $n \to \infty$. Then, by (\ref{norm req}), the sequence $\overline{[\D, a_{n}]}$ is Cauchy and thus have a limit, which we denote $\overline{[\D, a]}$. Again using (\ref{norm req}), we see that the sequence $a_{n}\xi_{n} \in \dom(\D)$ converges to $a\xi \in \H$,  while $\D(a_{n}\xi_{n})$ converges to $\overline{[\D, a]}\xi + a\eta
 \in \H$ as $n \to \infty$. Hence $a\xi$ belongs to $W^{1}$.
 
It is clear that $[\bar\D, a]$ has a bounded extension on $\H$, namely $\overline{[\D, a]}$, which satisfy (\ref{norm req}), and that $\bar\D$ has compact resolvents.
\end{proof}

\begin{ex} Let $M$ be a closed manifold equipped with a smooth measure and let $S$ be a graded, smooth, Hermitian vector bundle over $M$. Let $\H \coloneqq L^{2}(M, S)$ denote the graded Hilbert space of $L^{2}$-sections of $S$. Let $\D$ be an odd, symmetric, elliptic pseudo-differential operator acting on the smooth sections $C^{\infty}(M, S)$ of $S$. We consider $\D$ as an unbounded operator on $\H$ with domain $C^{\infty}(M, S)$. Let $\A \coloneqq C^{\infty}(M)$ be the algebra of smooth functions acting on $\H$ by pointwise multiplication, equipped with the norm 	\begin{equation}
	||a||_{\A} \coloneqq ||a|| + ||[\D, a]||.
	\end{equation}
Then standard $\Psi$DO theory implies that $\AHD = (C^{\infty}(M), L^{2}(M, S), \D)$ is a pre-spectral triple (cf.\ \cite{Shu:2001}).
\end{ex}
 
 The following is well-known.
\begin{prop}\label{prop product} Let $(\A_{1}, \H_{1}, \D_{1})$ and $(\A_{2}, \H_{2}, \D_{2})$ be spectral triples. Let $\A \coloneqq \A_{1} \otimes_{\alg} A_{2}$ denote the algebraic tensor product, equipped with the projective tensor product norm $|| \cdot ||_{\pi}$, and let $\H \coloneqq \H_{1} \tensor \H_{2}$ denote the graded Hilbert space tensor product. Let 
	\begin{equation}
	\D \coloneqq \D_{1} \tensor 1 + 1 \tensor \D_{2}
	\end{equation}
be the operator with domain 
	\begin{equation}
	\dom(\D) \coloneqq \dom(\D_{1}) \tensor_{\alg} \dom(\D_{2}) \subseteq \H,
	\end{equation}
the algebraic graded tensor product. Then $(\A, \H, \D)$ is a pre-spectral triple.
\end{prop}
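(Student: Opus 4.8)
The plan is to verify the defining conditions of a pre-spectral triple for $(\A, \H, \D)$ one at a time, the only nonroutine point being essential self-adjointness of $\D$. First, $\H = \H_1 \tensor \H_2$ is a graded Hilbert space and the representation $a_1 \otimes a_2 \mapsto a_1 \tensor a_2$ is even, since each factor acts evenly; its continuity will drop out of the norm estimate below. The operator $\D = \D_1 \tensor 1 + 1 \tensor \D_2$ is odd (odd $\tensor$ even plus even $\tensor$ odd) and symmetric, being the sum of the symmetric operators $A \coloneqq \D_1 \tensor 1$ and $B \coloneqq 1 \tensor \D_2$. The organizing observation is that, because $\D_1$ and $\D_2$ are odd, $A$ and $B$ \emph{anticommute} in the graded tensor product, so the cross term drops out of the square and
\[
\D^2 = A^2 + B^2 = \D_1^2 \tensor 1 + 1 \tensor \D_2^2
\]
on $\dom(\D_1^2) \tensor_\alg \dom(\D_2^2)$.

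For the commutator condition, a direct computation with the Koszul sign rule gives, on elementary tensors,
\[
[\D, a_1 \tensor a_2] = [\D_1, a_1] \tensor a_2 + a_1 \tensor [\D_2, a_2],
\]
which is bounded. Using that the operator norm is a cross norm and that $a \mapsto \|a\| + \|[\D, a]\|$ is a seminorm, I would estimate
\[
\|a_1 \tensor a_2\| + \|[\D, a_1 \tensor a_2]\| \le \bigl(\|a_1\| + \|[\D_1, a_1]\|\bigr)\bigl(\|a_2\| + \|[\D_2, a_2]\|\bigr) \le \|a_1\|_{\A_1}\|a_2\|_{\A_2},
\]
and then pass to a general element of $\A_1 \otimes_\alg \A_2$ through the infimum defining the projective norm $\|\cdot\|_{\pi}$; this yields (\ref{norm req}), and in particular the continuity of the representation. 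For the resolvent condition I would invoke Remark~\ref{rem compact resolvent}: since $A^2$ and $B^2$ commute, $e^{-t\D^2} = e^{-t\D_1^2} \tensor e^{-t\D_2^2}$, a tensor product of compact operators, hence compact.

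The main obstacle is essential self-adjointness of $\D$ on $\dom(\D_1) \tensor_\alg \dom(\D_2)$, which does not follow formally from that of the summands. I would establish it with Nelson's analytic vector theorem. Fix $\xi_i \in \H_i$ analytic for $\D_i$; such vectors are dense because $\D_i$ is self-adjoint. Using $\D^{2n} = (A^2 + B^2)^n$ and reducing each word in $A, B$ to the standard form $\pm A^a B^b$ (permitted by the anticommutation on the common invariant domain of smooth vectors), one obtains
\[
\|\D^n(\xi_1 \tensor \xi_2)\| \le \sum_{a + b = n} \binom{n}{a} \|\D_1^a \xi_1\|\,\|\D_2^b \xi_2\|,
\]
and feeding in the analyticity bounds $\|\D_i^k \xi_i\| \le C_i R_i^k\, k!$ shows that $\xi_1 \tensor \xi_2$ is again analytic for $\D$. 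Since such vectors span a dense subspace contained in $\dom(\D_1) \tensor_\alg \dom(\D_2)$, Nelson's theorem makes the restriction of $\D$ to that span essentially self-adjoint; as $\D$ on the full algebraic domain is a symmetric extension with the same closure, it is essentially self-adjoint as well. This is the step that genuinely uses self-adjointness of $\D_1$ and $\D_2$ (not merely symmetry), and where I expect the real work to lie.
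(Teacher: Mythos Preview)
Your proof is correct and follows essentially the same route as the paper: Nelson's analytic vector theorem for essential self-adjointness via the binomial estimate on $\|\D^n(\xi_1\tensor\xi_2)\|$, the factorization $e^{-t\D^2}=e^{-t\D_1^2}\tensor e^{-t\D_2^2}$ together with Remark~\ref{rem compact resolvent} for compact resolvents, and the Leibniz formula plus the projective-norm infimum for~(\ref{norm req}). The only cosmetic differences are that you make the anticommutation $A B = -B A$ explicit (the paper leaves it implicit in the binomial expansion) and that you phrase the norm estimate first on elementary tensors before passing to the projective norm, whereas the paper writes the estimate directly for a finite sum $\sum b_i\otimes c_i$.
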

We write $\D_{1} \times \D_{2}$ for the closure of $\D$.
\begin{proof} 

Suppose $\xi_{1} \in \H_{1}$ and $\xi_{2} \in \H_{2}$ are analytical vectors for $\D_{1}$ and $\D_{2}$, respectively. Then $\xi_{1} \tensor \xi_{2}$ is a smooth vector for $\D$ and, for $t > 0$,
	\begin{align}
	\sum_{n = 0}^{\infty}\frac{||\D^{n}(\xi_{1} \tensor \xi_{2})||}{n!}t^{n} &\le \sum_{n = 0}^{\infty}\frac{1}{n!}\sum_{k=0}^{n}{k \choose n}||\D_{1}^{k}\xi_{1}||\,||\D_{2}^{n-k}\xi_{2}||t^{n}\\
	&= \left(\sum_{k = 0}^{\infty}\frac{||\D_{1}^{k}\xi_{1}||}{k!}t^{k}\right)\left(\sum_{m = 0}^{\infty}\frac{||\D_{2}^{m}\xi_{2}||}{m!}t^{m}\right).
	\end{align}
Hence, choosing $t > 0$ small, we see that $\xi_{1} \tensor \xi_{2}$ is an analytical  vector for $\D$. Since the finite linear combination of such elementary tensors is dense in $\dom(\D)$, Nelson's analytical vector theorem (cf.\ \cite[Theorem X.39]{RS:1975}) proves that $\D$ is essentially self-adjoint.

A similar argument shows that 
	\begin{equation}\label{eq product of exp}
	e^{-t(\D_{1} \times \D_{2})^{2}} = e^{-t\D_{1}^{2}} \tensor e^{-t\D_{2}^{2}}, \quad t>0,
	\end{equation}
hence, by Remark~\ref{rem compact resolvent}, $\D$ has compact resolvents.	

Finally, it is clear that $\A$ preserves the domain $\dom(\D)$. Moreover, if $a = \sum b_{i} \otimes c_{i}$ is an element of $\A$, then 
	\begin{equation}\label{eq derivation product}
	[\D, a] = \sum \left([\D_{1}, b_{i}] \tensor c_{i} + b_{i} \tensor [\D_{2}, c_{i}]\right)
	\end{equation}
has a bounded extension to $\H$ and the inequality 
	\begin{align}
	||\sum b_{i} \otimes c_{i}|| + ||[\D, \sum b_{i} \otimes c_{i}]|| &\le \sum ||b_{i}||\cdot ||c_{i}||\\
	&\hspace{-50pt}+\sum \left(||[\D_{1}, b_{i}]|| \cdot ||c_{i}|| + ||b_{i}|| \cdot ||[\D_{2}, c_{i}]||\right)\\
	& \le \sum ||b_{i}||_{\A_{1}} \cdot ||c_{i}||_{\A_{2}} \\
	& \le ||\sum b_{i} \otimes c_{i}||_{\pi}
	\end{align}
shows that (\ref{norm req}) is satisfied.	
\end{proof}

\begin{defn} Let $(\A_{1}, \H_{1}, \D_{1})$ and $(\A_{2}, \H_{2}, \D_{2})$ be spectral triples. We define their {\em product} as
	\begin{equation}\label{defn eq product}
	(\A_{1} \otimes_{\pi} \A_{2}, \H_{1} \tensor \H_{2}, \D_{1} \times \D_{2}),
	\end{equation}
where $\otimes_{\pi}$ denote the projective tensor product.
\end{defn}

\begin{rem}\label{rem product of st} By Proposition~\ref{prop product}, that the tripe (\ref{defn eq product}) is indeed a spectral triple. It follows from equation (\ref{eq product of exp}) that the product of $\theta$-summable spectral triples is again $\theta$-summable. Finally, note that taking product of spectral triples is {\em associative} under the natural identifications.
\end{rem}

\section{The JLO-character}

For this section, see \cite{JLO:1988,C:1988b,GS:1989,C:1991c} for details. Recall that associated to a spectral triple $\AHD$, there is an {\em index map}
	\begin{equation}
	\Ind_{\D}: K_{0}(\A) \to \Z, 
	\end{equation}
given by associating to an idempotent $p \in \A \otimes \matrix_{k}$, representing a class in $\K_{0}(\A)$, the Fredholm index of the Fredholm operator
	\begin{equation}
	p \D  p: p(\H^{0} \otimes \C^{k}) \to p(\H^{1} \otimes \C^{k}).
	\end{equation}
 Here $\matrix_{k}$ denote the algebra of $k \times k$ complex matrices.

For $\theta$-summable spectral triples, it can be computed ``homologically'', using the entire cyclic theory of Connes (cf.\ \cite{C:1988b}), as follows.  We follow the convention of \cite{GS:1989}. Let
	\begin{equation}
	C_{n}(\A) \coloneqq \A \otimes_{\pi} (\A/\C)^{\otimes_{\pi}n},\quad n \in \Z_{\ge 0},
	\end{equation}
and let $C_{\bullet}(\A)$ denote the completion of $\bigoplus_{n = 0}^{\infty} C_{n}(\A)$ with respect to the collection of norms
	\begin{equation}
	||\oplus_{n} \alpha_{n}||_{\lambda} \coloneqq \sum_{n = 0}^{\infty} \frac{\lambda^{n}||\alpha_{n}||_{\pi}}{\sqrt{n!}}, \quad \lambda \in \Z_{\ge 1}.
	\end{equation}
Let $b$ and $B$ denote the Hochschild and Connes boundary maps on $C_{\bullet}(\A)$ respectively. The {\em entire cyclic homology} group $\HE_{\bullet}(\A)$ is defined as the homology of the complex $(C_{\bullet}(\A), b + B)$. The {\em entire cyclic cohomology} group $\HE^{\bullet}(\A)$ is defined using the topological dual $C^{\bullet}(\A)$ of $C_{\bullet}(\A)$.

\begin{notation} Let 
	\begin{equation}
	\Sigma^{n} \coloneqq \{ t = (t^{1}, \dots, t^{n}) \mid 0 \le t^{1} \le \dots \le t^{n} \le 1\} \subset [0, 1]^{n}
	\end{equation}
denote the standard $n$-simplex equipped with the standard Lebesgue measure $dt = dt^{1}\dots dt^{n}$ with volume $\frac{1}{n!}$.
\end{notation}	
\begin{defn}[{Jaffe-Lesniewski-Osterwalder \cite{JLO:1988}}]\label{defn JLO} Let $\AHD$ be a $\theta$-summable spectral triple. Denote $\Delta \coloneqq \D^{2}$ and define $da \coloneqq [\D, a]$, $a \in \A$.

For $(a^{0}, \dots, a^{n}) \in \A \otimes (\A/\C)^{\otimes n}$ and $t \in \Sigma^{n}$, we define
	\begin{equation}
	\label{JLO}
	\JLO{a^{0}, \dots, a^{n}}{t}_{\D} \coloneqq \str(a^{0}e^{-t^{1}\Delta}da^{1} e^{-(t^{2} - t^{1})\Delta} \dots d a^{n}e^{-(1 - t^{n})\Delta}),
	\end{equation}
where $\str$ is the super-trace on $\H$, and	
	\begin{equation}
	\ch^{n}_{\D}(a^{0}, \dots, a^{n}) \coloneqq \int_{\Sigma^{n}}\!\!\JLO{a^{0}, \dots, a^{n}}{t}_{\D}dt.
	\end{equation} 
\end{defn}

Then, $\ch^{\ast}_{\D}$ defines an element of $\HE^{0}(\A)$ called the {\em JLO-character} of $\AHD$ and satisfies the abstract index formula
	\begin{equation}
	\langle \ch^{\ast}_{\D}, \ch_{\ast}(e) \rangle = \Ind_{\D}(e), \quad e \in K_{0}(\A),
	\end{equation}
where $\ch_{\ast}(e) \in \HE_{0}(\A)$ denote the entire cyclic homological Chern character of $e \in K_{0}(\A)$. See \cite{GS:1989} for details.

\section{Multiplicativity}

The following multiplicative property of the index map is a folklore.

\begin{prop}\label{prop index map is multiplicative} Let $(\A_{1}, \H_{1}, \D_{1})$ and $(\A_{2}, \H_{2}, \D_{2})$ be spectral triples. Then the following diagram is commutative:
	\begin{equation}
	\xymatrix{
	\K_{0}(\A_{1}) \otimes \K_{0}(\A_{2}) \ar[r]\ar[d]^{\Ind_{\D_{1}} \otimes \Ind_{\D_{2}}} & \K_{0}(\A_{1} \otimes_{\pi} \A_{2}) \ar[d]^{\Ind_{\D_{1} \times \D_{2}}}\\
\Z \otimes \Z \ar@{=}[r]^{\times} & \Z
	}.
	\end{equation}
\end{prop}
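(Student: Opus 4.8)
The plan is to prove multiplicativity of the index map by reducing it to the multiplicativity of the Fredholm index. The statement asserts that for idempotents $e_1 \in \A_1 \otimes \matrix_{k_1}$ and $e_2 \in \A_2 \otimes \matrix_{k_2}$, we have $\Ind_{\D_1 \times \D_2}(e_1 \otimes e_2) = \Ind_{\D_1}(e_1) \cdot \Ind_{\D_2}(e_2)$. Since the pairing $\K_0(\A_1) \otimes \K_0(\A_2) \to \K_0(\A_1 \otimes_\pi \A_2)$ is induced by the tensor product of idempotents, the whole diagram commutes once this identity is verified on generators.

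First I would unwind the definitions. The operator $\D \coloneqq \D_1 \times \D_2$ acts on the graded tensor product $\H = \H_1 \tensor \H_2$, and the relevant Fredholm operator is $e \D e$ restricted to $e(\H^0 \otimes \C^k)$, where $e = e_1 \otimes e_2$ and $k = k_1 k_2$. Using the graded tensor product structure, $\D$ decomposes as $\D_1 \tensor 1 + 1 \tensor \D_2$, so on the range of $e$ the operator $e\D e$ should be expressible in terms of $e_1 \D_1 e_1$ and $e_2 \D_2 e_2$. The key observation is that $(e\D e)$ is a graded tensor product of the two Fredholm operators $F_1 \coloneqq e_1 \D_1 e_1$ and $F_2 \coloneqq e_2 \D_2 e_2$, and the grading on $\H_1 \tensor \H_2$ is the total grading. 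I would make precise that $e\D e$ is unitarily equivalent to $F_1 \tensor 1 + 1 \tensor F_2$ acting on the compression $(e_1 \H_1) \tensor (e_2 \H_2)$.

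The core analytic step is then the multiplicativity of the index for such a product of odd Fredholm operators on a graded space. The cleanest route is to compute the index via the kernel of the associated Laplacian, or equivalently to use the Künneth-type formula: if $F_1, F_2$ are self-adjoint odd Fredholm operators, then the kernel of $F_1 \tensor 1 + 1 \tensor F_2$ is $\ker F_1 \tensor \ker F_2$, with grading given by the tensor product of gradings. Writing $\Ind_{\D_i}(e_i) = \dim(\ker F_i)^0 - \dim(\ker F_i)^1 = \str|_{\ker F_i}$, the multiplicativity of the supertrace under graded tensor product, $\str(\ker F_1 \tensor \ker F_2) = \str(\ker F_1) \cdot \str(\ker F_2)$, yields exactly the desired product formula. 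Alternatively, one can invoke the general fact that the index is multiplicative under the external Kasparov product in $KK$-theory, but the direct kernel computation is more elementary and self-contained here.

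The main obstacle I anticipate is the identification of $e \D e$ with the graded tensor product $F_1 \tensor 1 + 1 \tensor F_2$, since $\D$ involves unbounded operators and one must check that $e$ preserves the relevant domains (which follows from the fact that $\A_i$ preserves $\dom(\D_i)$, as used in Proposition~\ref{prop product}) and that the compression genuinely splits the Fredholm problem into the two factors. One must also be careful with sign conventions coming from the graded tensor product, as the odd operator $\D_2$ acquires a grading sign when commuted past $\H_1$; getting the Koszul signs right is what makes the supertrace genuinely multiplicative rather than introducing a spurious sign. Once this identification is in place, the index computation via kernels is routine, so I would spend the bulk of the proof on the domain and grading bookkeeping.
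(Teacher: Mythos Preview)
Your proposal is correct and follows essentially the same strategy as the paper: compute the index as the supertrace of the kernel projection, identify the kernel of the product operator as the tensor product of the kernels, and use multiplicativity of the supertrace on graded tensor products.

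The paper's execution is slightly more streamlined than what you outline. Rather than carrying the idempotents $e_i$ throughout and wrestling with the domain and grading bookkeeping you anticipate, the paper first reduces (via \cite[Theorem~D]{GS:1989}) to self-adjoint idempotents $p_i$, observes that $(p_i\A_ip_i, p_i\H_i, p_i\D_ip_i)$ is again a spectral triple with $p_1\!\otimes\! p_2\,(\D_1\times\D_2)\,p_1\!\otimes\! p_2 = p_1\D_1p_1 \times p_2\D_2p_2$, and thereby reduces to the case $p_i = 1$. The kernel identification $\ker(\D_1\times\D_2)^2 = \ker\D_1^2 \tensor \ker\D_2^2$ is then obtained cleanly by taking $t\to\infty$ in the heat-kernel factorization $e^{-t(\D_1\times\D_2)^2} = e^{-t\D_1^2}\tensor e^{-t\D_2^2}$ already established in Proposition~\ref{prop product}. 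This sidesteps precisely the unbounded-operator and Koszul-sign issues you flag as the main obstacle.
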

 
\begin{proof} Let $e_{i} \in \K_{0}(\A_{i})$, $i \in \{1, 2\}$, be given and let $e_{1} \otimes e_{2}$ denote their product in $K_{0}(\A_{1} \otimes_{\pi} \A_{2})$. We need to show that
	\begin{equation}
	\Ind_{\D_{1}}(e_{1}) \cdot \Ind_{\D_{2}}(e_{2}) = \Ind_{\D_{1} \times \D_{2}}(e_{1} \otimes e_{2}).
	\end{equation}
As in the proof of \cite[Theorem D]{GS:1989}, we may assume that $\A_{i}$ is an involutive Banach algebra acting involutively on $\H_{i}$ and $e_{i}$ is represented by a self-adjoint idempotent $p_{i} \in \A_{i}$. Then $(p_{i}\A_{i}p_{i}, p_{i}\H_{i}, p_{i}\D_{i}p_{i})$ is a spectral triple. Moreover we can easily see that 
	\begin{equation}
	p_{1} \tensor p_{2} (\D_{1} \times \D_{2}) p_{1} \tensor p_{2} = p_{1}\D_{1}p_{1} \times p_{2} \D_{2}p_{2}
	\end{equation}
 on $p_{1} \tensor p_{2} \cdot \H_{1} \tensor \H_{2} = p_{1}\H_{1} \tensor p_{2}\H_{2}$. Hence, we may assume that $p_{i} = 1 \in \A_{i}$.
 
 Let $P_{i}$ denote the projection onto $\ker (\D_{i}^{2})$ and let $P$ denote the projection onto $\ker (\D_{1} \times \D_{2})^{2}$. Taking the limit $t \to \infty$ in (\ref{eq product of exp}), we see that
 	\begin{equation}
	P = P_{1} \tensor P_{2}.
	\end{equation}
Hence
	\begin{equation}
	\Ind(\D_{1} \times \D_{2}) = \str(P) = \str(P_{1}) \str(P_{2}) = \Ind(\D_{1}) \Ind(\D_{2}).
	\end{equation}
The proof is complete.		
\end{proof}

Now we study the multiplicative property of the JLO-character. We start by defining an ``$A_{\infty}$-exterior product structure'' on entire cyclic chains, following \cite{GJ:1990, GJP:1991}. 

First, the Hochschild shuffle product is defined as follows.
\begin{defn}
Let $p$, $q \in \Z_{\ge 1}$ be natural numbers and let $S(p, q)$ denote the set $\{(1, 1), \dots, (1, p), (2, 1), \dots, (2, q)\}$, ordered lexicographically, that is, 
	\begin{equation}\label{order Spq}
	(1, 1)  < \dots < (1, p) < (2, 1) < \dots < (2, q).
	\end{equation}
A {\em permutation} $\chi$ of $S(p, q)$is called a {\em $(p, q)$-shuffle}\index{shuffle} if 
	\[\chi(1, 1) < \dots < \chi(1, p) \quad\text{and}\quad \chi(2, 1) < \dots < \chi(2, q).\] 
\end{defn}

Let $n = p + q$. Then the ordering (\ref{order Spq}) gives an identification of the set $S(p, q)$ with the set $\{1, \dots, n\}$ and we use this identification to let permutations of $S(p, q)$ act on $\{1, \dots, n\}$.

Let the group of permutations of $\{1, \dots, n\}$ act on $C_{n}(\A)$ by
	\[\chi(a^{0}, a^{1},\dots , a^{n}) \coloneqq (-1)^{\chi} (a^{0}, a^{\chi^{-1}(1)}, \dots , a^{\chi^{-1}(n)}).\] 

\begin{defn} Let $\alpha = (a^{0}, a^{1},\dots , a^{p}) \in C_{p}(\A_{1})$ and $\beta = (b^{0}, b^{1}, \dots , b^{q}) \in C_{q}(\A_{2})$ be elementary tensors. The {\em shuffle product}\index{product!shuffle} $\alpha \times \beta \in C_{p+q}(\A_{1} \otimes \A_{2})$ is defined as
	\begin{align*}
	\alpha \times \beta \coloneqq \sum_{(p, q)\text{-shuffles}}\chi(a^{0} \otimes b^{0}, a^{1} \otimes 1, \dots, a^{p} \otimes 1, 1 \otimes b^{1}, \dots, 1 \otimes b^{q}).
	\end{align*}
\end{defn}

\begin{rem}\label{rem shuffle extends}
We note that the shuffle product extends to 
	\[C_{\bullet}(\A_{1}) \otimes C_{\bullet}(\A_{2}) \to C_{\bullet}(\A_{1} \otimes_{\pi} \A_{2}).\]
Indeed, if $\alpha \in C_{\bullet}(\A_{1})$ and $\beta \in C_{\bullet}(\A_{2})$, then for any $\lambda$, $\mu \in \Z_{\ge 1}$ satisfying $\mu \ge \sqrt{2} \lambda$, we have
	\begin{align}
	||\alpha \times \beta||_{\lambda} &= \sum_{n \ge p \ge 0} \frac{\lambda^{n}||\alpha_{p} \times \beta_{n - p}||_{\pi}}{\sqrt{n!}}\\
	&\le \sum_{n \ge p \ge 0} \frac{\lambda^{n}{n \choose p}||\alpha_{p}||_{\pi} ||\beta_{n - p}||_{\pi}}{\sqrt{n!}}\\
	&\le ||\alpha||_{\mu} \cdot ||\beta||_{\mu}.
	\end{align}
Here we used the fact that the number of $(p, n - p)$-shuffles is ${n \choose p} \le 2^{n}$.
\end{rem}

It is well-known and easy to see that the shuffle product is associative and compatible with the Hochschild boundary map $b$. However, the Connes boundary map $B$ is {\em not} a derivation with respect to the shuffle product and we fix this by perturbing the shuffle product by a term called the cyclic shuffle product $B_{2}$. The new product we obtain is not associative, but only homotopy associative. In fact, there exists a sequence of operations
	\begin{equation}
	B_{r}: C_{p_{1}}(\A_{1}) \otimes \dots \otimes C_{p_{r}}(\A_{r}) \to C_{r + p_{1} + \dots + p_{r}}(\A_{1} \otimes \dots \otimes \A_{r})
	\end{equation}
such that $B_{1} = B$ and $B_{2} = \text{cyclic shuffle product}$ and $B_{3}$ ``controls'' the failure of associativity et cetera, defined using the following combinatorial device.

\begin{defn} 
Let $r \in \Z_{\ge 1}$ and $p_{1}, \dots, p_{r} \in \Z_{\ge 0}$. 
Let $C(p_{1}, \dots , p_{r})$ be the set $\left\{{0 \choose 1}, \dots, {p_{1} \choose 1}, \dots, {0 \choose r}, \dots, {p_{r} \choose r}\right\}$, ordered lexicographically,	that is ${l_{1} \choose k_{1}} \le {l_{2} \choose k_{2}}$ if and only if $k_{1} < k_{2}$ or $k_{1} = k_{2}$ and $l_{1} \le l_{2}$. This ordering gives an identification of $C(p_{1}, \dots, p_{r})$ with the set $\{1, \dots, r + p_{1} + \dots p_{r}\}$.

A $(p_1, \dots, p_r)$-cyclic shuffle is a permutation $\sigma$ of the set $C(p_{1}, \dots, p_{r})$ such that
	\begin{enumerate}
	\item $\sigma{0 \choose i_{1}} < \sigma{0 \choose i_{2}}$ if $i_{1} <i_{2}$ and
	\item for each $1 \le i \le r$, there is a number $0 \le j_{i} \le p_{i}$ such that
		\begin{equation*}\sigma{j_{i}\choose i} < \dots < \sigma{p_{i} \choose i} < \sigma{0 \choose i} < \dots < \sigma{j_{i} - 1 \choose i}.
		\end{equation*}
	\end{enumerate}
\end{defn}

\begin{defn}
For $\alpha_{i} = (a_{1}^{0}, \dots, a_{1}^{p_{i}}) \in C_{p_{i}}(\A_{i})$, we define $B_{r}(\alpha_{1}, \dots, \alpha_{r}) \in C_{r + p_{1} + \dots + p_{r}}(\A_{1} \otimes \dots \otimes \A_{r})$ as
	\begin{equation}
	B_{r}(\alpha_{1}, \dots, \alpha_{r}) \coloneqq \sum_{\sigma} \sigma(1, a_{1}^{0}, \dots, a_{1}^{p_{1}}, \dots, a_{r}^{0}, \dots, a_{r}^{p_{r}}),
	\end{equation}	
where $a_{i}^{j} \in \A_{i}$ is considered an element of $\A_{1} \otimes \dots \otimes \A_{r}$ via $a_i \mapsto 1_{\A_{1}} \otimes \dots \otimes 1_{\A_{i-1}} \otimes a_{i} \otimes 1_{\A_{i+1}} \otimes \dots  \otimes 1_{\A_{r}}$ and the summation is over all $(p_{1}, \dots, p_{r})$-cyclic shuffles. For $\alpha \in C_{\bullet}(\A_{1})$ and $\beta \in C_{\bullet}(\A_{2})$, we also write
	\begin{equation}
	\alpha \times' \beta \coloneqq B_{2}(\alpha, \beta)
	\end{equation}
and call $\times'$ the {\em cyclic shuffle product}.	
\end{defn}	

\begin{rem} If $\A$ is commutative, we recover the $B_{k}$ terms of the $A_{\infty}$-structure of Getzler and Jones \cite{GJ:1990} using the multiplication map.
\end{rem}

One motivation for the definition of the shuffles and the cyclic shuffles is the following. 

We let permutations on $\{1, \dots, n\}$ act on $[0, 1]^{n}$ by
 	\begin{equation}\chi(t^{1}, \dots, t^{n}) \coloneqq (t^{\chi^{-1}(1)}, \dots, t^{\chi^{-1}(n)}).
	\end{equation}

Then for any element $t \in [0, 1]^{n}$, such that all the entries are distinct, there exists a {\em unique} permutation $\chi$ such that $\chi(t)$ is an element of $\Sigma^{n}$, {\em i.e.}\ entries of $\chi(t)$ are in increasing order. Therefore, permutations give a decomposition of $[0, 1]^{n}$ into $n$-simplices. The shuffles and the cyclic shuffles give decompositions of product simplices. 

\begin{lem}[{Getzler-Jones-Petrack \cite{GJP:1991}}]\label{lem simplicial decomposition}
\begin{enumerate}
\item\label{item shuffle} Let $p$ and $q$ be natural numbers. For a $(p, q)$-shuffle $\chi$, define 
	\begin{align}\Sigma(\chi) &\coloneqq \{(s, t) \in \Sigma^{p} \times \Sigma^{q} \subset [0, 1]^{p+q} \mid \chi(s, t) \in \Sigma^{p+q}\}.
	\end{align}

Then, $\Sigma(\chi)$ is a $(p+q)$-simplex and, up to a set of measure zero, $\Sigma^{p} \times \Sigma^{q}$ is the disjoint union of the sets $\Sigma(\chi)$, $\chi$ shuffle.

\item\label{item cyclic shuffle} Similarly, for a cyclic shuffle $\sigma$, define
	\begin{align}
	\Sigma(\sigma) &\coloneqq \{(s, t_{1}, \dots, t_{r}) \mid \sigma(s + t_{1} + \dots  + t_{r}) \in \Sigma^{r + p_{1}+ \dots + p_{r}}\},
	\end{align}
where	$(s, t_{1}, \dots, t_{r}) \in \Sigma^{r} \times \Sigma^{p_{1}} \times \dots \times \Sigma^{p_{r}} \subset [0, 1]^{r + p_{1}+ \dots + p_{r}}$ and $s + t^{1} + \dots + t^{r}$ denote the $(r + p_{1} + \dots + p_{r})$-tuple
	\begin{equation}
	(s^{1}, s^{1} + t_{1}^{1}, \dots s^{1} + t_{1}^{p_{1}}, \dots, s^{r}, s^{r} + t_{r}^{1}, \dots, s^{r} + t_{r}^{p_{r}})
	\end{equation}
considered modulo $1$.

Then, $\Sigma(\sigma)$ is a $(r + p_{1}+ \dots + p_{r})$-simplex and, up to a set of measure zero, $\Sigma^{r} \times \Sigma^{p_{1}} \times \dots \times \Sigma^{p_{r}}$ is the disjoint union of the sets $\Sigma(\sigma)$, $\sigma$ cyclic shuffle.
\end{enumerate}
\qed
\end{lem}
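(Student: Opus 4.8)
The plan is to realise both statements through a single principle: a tuple of distinct reals has a unique sorting permutation, and the region on which this permutation equals a prescribed one is a chamber of the braid arrangement, hence a simplex. Part (\ref{item shuffle}) applies this on the cube and part (\ref{item cyclic shuffle}) on the circle $\R/\Z$. For (\ref{item shuffle}) I would argue as follows. The hyperplanes $\{u^{i} = u^{j}\}$ cut $[0,1]^{p+q}$ into $(p+q)!$ chambers, and the chamber cut out by $u^{\chi^{-1}(1)} \le \dots \le u^{\chi^{-1}(p+q)}$ is exactly $\chi^{-1}(\Sigma^{p+q})$, a coordinate-permuted copy of $\Sigma^{p+q}$ and so a simplex. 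For $(s,t) \in \Sigma^{p} \times \Sigma^{q}$ with distinct entries, the sorting permutation $\chi$—the unique permutation with $\chi(s,t) \in \Sigma^{p+q}$, equivalently the rank function of $(s,t)$—satisfies $\chi(1) < \dots < \chi(p)$ and $\chi(p+1) < \dots < \chi(p+q)$ precisely because $s$ and $t$ are separately increasing; that is, $\chi$ is a $(p,q)$-shuffle, and conversely for a shuffle $\chi$ the whole chamber $\chi^{-1}(\Sigma^{p+q})$ lies in $\Sigma^{p} \times \Sigma^{q}$. Hence $\Sigma(\chi) = \chi^{-1}(\Sigma^{p+q})$ is a simplex, and since the sorting permutation is unique off the arrangement, $\Sigma^{p} \times \Sigma^{q}$ is the disjoint union of the $\Sigma(\chi)$ up to measure zero.

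For (\ref{item cyclic shuffle}) I run the same argument on the circle. To $(s, t_{1}, \dots, t_{r})$ associate the $N := r + p_{1} + \dots + p_{r}$ points $x_{i}^{k} := s^{i} + t_{i}^{k} \bmod 1$ (with $t_{i}^{0} := 0$, so $x_{i}^{0} = s^{i}$); then $s + t_{1} + \dots + t_{r}$ is $(x_{i}^{k})$ listed in the order of $C(p_{1}, \dots, p_{r})$, and $\sigma(s + t_{1} + \dots + t_{r}) \in \Sigma^{N}$ says exactly that $\sigma$ is the rank function of the $x_{i}^{k}$ sorted into $[0,1)$. For a generic point these $N$ values are distinct, so $\sigma$ is well-defined and unique. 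The two conditions defining a cyclic shuffle then match the two features of this rank function: inside block $i$ the reals $s^{i} \le s^{i} + t_{i}^{1} \le \dots \le s^{i} + t_{i}^{p_{i}} \le s^{i} + 1$ are increasing, so the $x_{i}^{k}$ sit in cyclic order on the circle, and with $j_{i}$ the first index that wraps past $1$ the block is sorted as $x_{i}^{j_{i}} < \dots < x_{i}^{p_{i}} < x_{i}^{0} < \dots < x_{i}^{j_{i}-1}$, which is condition (2); condition (1) is just $s^{1} \le \dots \le s^{r}$. Thus every generic point has a cyclic-shuffle rank function and lies in exactly one $\Sigma(\sigma)$.

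The crux is to see that each $\Sigma(\sigma)$ is an $N$-simplex. On $\Sigma(\sigma)$ the wrap pattern $(j_{i})$ is constant, so $(s, t) \mapsto (x_{i}^{k})$ is affine there, and I would construct its inverse explicitly: given $y = (y^{1} \le \dots \le y^{N}) \in \Sigma^{N}$, set $x_{i}^{k} := y^{\sigma\binom{k}{i}}$, $s^{i} := x_{i}^{0}$, and $t_{i}^{k} := x_{i}^{k} - s^{i}$ for $k < j_{i}$ and $t_{i}^{k} := x_{i}^{k} - s^{i} + 1$ for $k \ge j_{i}$. The work is in checking that for every $y \in \Sigma^{N}$ this lands in $\Sigma^{r} \times \Sigma^{p_{1}} \times \dots \times \Sigma^{p_{r}}$ with the prescribed wraps—equivalently, that conditions (1) and (2) force the product-simplex inequalities automatically—so that the map is a genuine affine bijection $\Sigma^{N} \to \Sigma(\sigma)$ whose linear part is unipotent. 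This presents $\Sigma(\sigma)$ as an affine image of the standard simplex, hence a simplex of volume $\frac{1}{N!}$, and combined with the disjointness and covering above yields the decomposition. I expect this last bookkeeping of the mod-$1$ offsets—verifying that no spurious facets appear, so the chamber is a simplex and not merely a polytope—to be the main obstacle.
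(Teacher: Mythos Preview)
The paper does not actually prove this lemma: it is stated with attribution to Getzler--Jones--Petrack and closed with a bare \qed. So there is no ``paper's own proof'' to compare against; the author simply imports the result from \cite{GJP:1991}.

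Your argument is the standard one and is correct. Part (\ref{item shuffle}) is exactly the classical Eilenberg--Zilber simplicial decomposition, and your explanation via sorting permutations is the right way to see it. For part (\ref{item cyclic shuffle}) your circle argument is also right, and your explicit inverse is the key step. The bookkeeping you flag as the main obstacle does go through: condition (1) of a cyclic shuffle forces $s^{1} \le \dots \le s^{r}$ immediately, and condition (2) gives, for each block, $x_{i}^{j_{i}} \le \dots \le x_{i}^{p_{i}} \le x_{i}^{0} \le \dots \le x_{i}^{j_{i}-1}$, from which the inequalities $0 \le t_{i}^{1} \le \dots \le t_{i}^{p_{i}} \le 1$ follow by the offset rule you wrote (the only nontrivial checks are $t_{i}^{j_{i}-1} \le t_{i}^{j_{i}}$, which is $x_{i}^{j_{i}-1} \le x_{i}^{j_{i}} + 1$ and hence automatic for numbers in $[0,1]$, and $t_{i}^{p_{i}} \le 1$, which is $x_{i}^{p_{i}} \le x_{i}^{0}$ and is part of condition (2)). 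The affine map you describe has triangular Jacobian with $1$'s on the diagonal, so each $\Sigma(\sigma)$ is a genuine $N$-simplex of volume $1/N!$, and the volume count then matches the multinomial formula for the number of cyclic shuffles noted after the lemma.
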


\begin{rem} It follows from Lemma~\ref{lem simplicial decomposition}(\ref{item cyclic shuffle}) that the number of $(p_{1}, \dots, p_{r})$-cyclic shuffles is
	\begin{equation}
	{r + p_{1} + \dots + p_{r} \choose r, p_{1}, \dots, p_{r}} = \frac{(r + p_{1} + \dots + p_{r})!}{r!p_{1}! \dots p_{r}!}.
	\end{equation}
An argument similar to Remark~\ref{rem shuffle extends}, proves that the operation $B_{r}$ extend to
	\begin{equation}
	B_{r}: C_{\bullet}(\A_{1}) \otimes_{\pi} \dots \otimes_{\pi} C_{\bullet}(\A_{r}) \to C_{r + \bullet}(\A_{1} \otimes_{\pi} \dots \otimes_{\pi} \A_{r}). 
	\end{equation}
\end{rem}

For $B = B_{1}$, we have the following. First note that the expression (\ref{JLO}) makes sense for $a^{0}$ in $\A + [\D, \A]$. We write
	\begin{equation}
	\JLO{d\alpha}{t}_{\D} \coloneqq \JLO{da^{0}, a^{1}, \dots, a^{n}}{t}_{\D}
	\end{equation}
for $\alpha = (a^{0}, \dots, a^{n}) \in C_{n}(\A)$ and $t \in \Sigma^{n}$.	

\begin{prop}[{\cite[Lemma 2.2(2)]{GS:1989}}]\label{GJ} Let $\AHD$ be a $\theta$-summable spectral triple. Then  
	\begin{equation}
	B\ch_{\D}(\alpha) = \int_{\Sigma^{n}}\JLO{d\alpha}{t}dt, 
	\end{equation}
for $\alpha \in C_{n}(\A)$.	
\qed
\end{prop}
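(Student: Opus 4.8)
The plan is to pass to the chain level and reduce everything to graded cyclicity of the supertrace together with a change of variables on the simplex. Writing $B$ for the Connes operator on chains, we have $B\ch_{\D}(\alpha) = \ch_{\D}^{n+1}(B\alpha)$ by definition of the operator $B$ on the dual complex $C^{\bullet}(\A)$, so it suffices to evaluate $\ch_{\D}^{n+1}$ on
\[
B\alpha = \sum_{j=0}^{n}(-1)^{nj}(1, a^{j}, a^{j+1}, \dots, a^{j-1}),
\]
the sum of cyclic rotations with a $1$ inserted in the zeroth slot (indices read mod $n+1$). Since the zeroth entry is not differentiated and $1$ acts as the identity operator, the $j$-th summand contributes
\[
(-1)^{nj}\int_{\Sigma^{n+1}}\str\left(e^{-t^{1}\Delta}\,da^{j}\,e^{-(t^{2}-t^{1})\Delta}\,da^{j+1}\cdots da^{j-1}\,e^{-(1-t^{n+1})\Delta}\right)dt.
\]

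First I would exploit that $\Delta = \D^{2}$ is even, so each heat operator $e^{-s\Delta}$ is even while each $da^{i} = [\D, a^{i}]$ is odd; all rearrangements below are justified by $\theta$-summability, which makes every operator word trace-class with summable bounds. Graded cyclicity of $\str$ lets me move the leading factor $e^{-t^{1}\Delta}$ all the way to the right with no sign, merging it with $e^{-(1-t^{n+1})\Delta}$. The $j$-th summand then becomes the integral over $\Sigma^{n+1}$ of
\[
\str\left(da^{j}\,e^{-s_{1}\Delta}\,da^{j+1}\cdots da^{j-1}\,e^{-s_{n+1}\Delta}\right),\qquad s_{k}=t^{k+1}-t^{k}\ (1\le k\le n),\quad s_{n+1}=1-t^{n+1}+t^{1},
\]
so that now every $da$ is separated by a heat operator and the ``gaps'' $s_{1},\dots,s_{n+1}$ are nonnegative and sum to $1$.

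The heart of the argument is the next change of variables. Passing from $t\in\Sigma^{n+1}$ to the coordinates $(s_{1},\dots,s_{n},t^{1})$ is unipotent, hence measure-preserving, and the constraint $0\le t^{1}\le 1-(s_{1}+\dots+s_{n})=s_{n+1}$ lets me integrate out $t^{1}$, producing a weight $s_{n+1}$ and an integral over the gap-simplex $\{s_{i}\ge 0,\ \sum_{i}s_{i}=1\}$. I would then apply graded cyclicity once more to rotate the operator word into the standard order $da^{0},\dots,da^{n}$: bringing $da^{0}$ to the front costs $n-j+1$ elementary cyclic rotations, each moving an odd factor past the remaining parity-$n$ word, i.e.\ a total sign $(-1)^{n(n-j+1)}$, which combines with $(-1)^{nj}$ to give $(-1)^{n(n+1)}=1$. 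Under this rotation the weight $s_{n+1}$ is carried to the gap sitting between $da^{j-1}$ and $da^{j}$ in standard order; as $j$ runs through $0,\dots,n$ these are precisely the $n+1$ distinct gaps $r_{1},\dots,r_{n+1}$ of the standard configuration, each occurring once. Summing the terms therefore yields
\[
\int_{\{r_{i}\ge 0,\ \sum r_{i}=1\}}\str\left(da^{0}e^{-r_{1}\Delta}da^{1}\cdots da^{n}e^{-r_{n+1}\Delta}\right)\Bigl(\sum_{k=1}^{n+1}r_{k}\Bigr)\,d\mu,
\]
and since $\sum_{k}r_{k}=1$ on the simplex the weight disappears, leaving exactly $\int_{\Sigma^{n}}\JLO{d\alpha}{t}_{\D}\,dt$ after undoing the (measure-preserving) identification of the gap-simplex with $\Sigma^{n}$.

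The step I expect to be the main obstacle is the bookkeeping in this last paragraph: keeping the graded signs consistent with the chosen convention for $B$ so that they cancel to $+1$, and verifying that the weight $s_{n+1}$ redistributes over all $n+1$ gaps as $j$ varies. This is the ``cyclic'' counterpart of the simplicial decomposition in Lemma~\ref{lem simplicial decomposition}, and one should double-check that the permutation of the $s_{i}$ induced by the rotation is genuinely measure-preserving on the gap-simplex, so that no Jacobian factors intervene.
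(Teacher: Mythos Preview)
The paper does not give its own proof of this proposition: it is quoted from \cite[Lemma~2.2(2)]{GS:1989} and closed with a bare \qed, so there is nothing to compare against beyond confirming correctness.

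Your argument is the standard direct computation and is correct. The ingredients you isolate---writing $B\ch_{\D}(\alpha)=\ch_{\D}^{n+1}(B\alpha)$, cycling the even factor $e^{-t^{1}\Delta}$ around using graded cyclicity of $\str$, the unipotent change to gap coordinates $(t^{1},s_{1},\dots,s_{n})$ with $\int dt^{1}=s_{n+1}$, and then the cyclic rotation of the operator word---are all sound. The point you flag as the main obstacle is fine: each elementary rotation moves a single odd factor past a word of parity $n$, so $n{-}j{+}1$ rotations give $(-1)^{n(n-j+1)}$, and the product with $(-1)^{nj}$ is $(-1)^{n(n+1)}=1$; the induced permutation of gap variables is a cyclic permutation of the coordinates of the probability simplex, hence measure-preserving with no Jacobian. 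Finally, as $j$ runs over $0,\dots,n$ the weight $s_{n+1}$ lands on the gap between $da^{j-1}$ and $da^{j}$ in standard order, i.e.\ on $r_{j}$ (reading $r_{0}$ as $r_{n+1}$), so the sum of weights is $\sum_{k=1}^{n+1}r_{k}=1$ on the nose. Nothing is missing.
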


Following is the analogue of \cite[Proposition 4.1, 4.2]{GJP:1991} and \cite[Theorem 3.2]{BG:1994}. Part (\ref{cyclic shuffle JLO}) below uses Proposition~\ref{GJ} and can be considered an extension of it to the case $r \ge 2$.

\begin{thm}\label{thm: A-inf and JLO}
\begin{enumerate}
\item\label{shuffle JLO}
 Let $(\A_{1}, \H_{1}, \D_{1})$ and $(\A_{2}, \H_{2}, \D_{2})$ be $\theta$-summable spectral triples. Then
	\begin{equation}
	\ch_{\D_{1} \times \D_{2}}(\alpha_{1} \times \alpha_{2}) = \ch_{\D_{1}}(\alpha_{1})\ch_{\D_{2}}(\alpha_{2}),
	\end{equation}
for $\alpha_{1} \in C_{\bullet}(\A_{1})$ and $\alpha_{2} \in C_{\bullet}(\A_{2})$.
\item\label{cyclic shuffle JLO} Let $(\A_{i}, \H_{i}, \D_{i})$, $1 \le i \le r$, be $\theta$-summable spectral triples. Then
	\begin{equation}\label{eq chBr}
	\ch_{\D_{1} \times \D_{2} \times \dots \times \D_{r}}B_{r}(\alpha_{1}, \dots, \alpha_{r}) = \frac{1}{r!}B\ch_{\D_{1}}(\alpha_{1}) \dots B\ch_{\D_{r}}(\alpha_{r}),
	\end{equation}
for $\alpha_{i} \in C_{\bullet}(\A_{i})$, $1 \le i \le r$. In particular, for $r = 2$, 
	\begin{equation}
	\ch_{\D_{1} \times \D_{2}}(\alpha_{1} \times' \alpha_{2}) = \half{1}B\ch_{\D_{1}}(\alpha)B\ch_{\D_{2}}(\alpha_{2}).
	\end{equation}
\end{enumerate}
\end{thm}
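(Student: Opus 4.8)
The plan is to prove both identities directly from the definition of the JLO integrand (\ref{JLO}), using three structural inputs: the factorization $e^{-s(\D_1\times\D_2)^2} = e^{-s\Delta_1}\tensor e^{-s\Delta_2}$ from (\ref{eq product of exp}), the multiplicativity $\str = \str_1 \cdot \str_2$ of the super-trace on a graded tensor product, and the simplicial decompositions of Lemma~\ref{lem simplicial decomposition}. Since all the operations involved are continuous by the estimates of Remark~\ref{rem shuffle extends} and their cyclic analogue, I would first reduce both statements to elementary tensors $\alpha_i = (a_i^0, \dots, a_i^{p_i})$.

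For Part~(\ref{shuffle JLO}), I would fix a $(p,q)$-shuffle $\chi$ and expand the integrand $\JLO{\chi(a^0\tensor b^0, a^1 \tensor 1, \dots, a^p \tensor 1, 1 \tensor b^1, \dots, 1\tensor b^q)}{u}_{\D_1\times\D_2}$ for $u \in \Sigma^{p+q}$. Because $d(a \tensor 1) = (da)\tensor 1$ and $d(1 \tensor b) = 1 \tensor (db)$ by (\ref{eq derivation product}), every operator between the heat factors is purely of the form $A \tensor 1$ or $1 \tensor B$; splitting each heat factor by (\ref{eq product of exp}) and applying super-trace multiplicativity factorizes the integrand, the shuffle sign $(-1)^{\chi}$ combining with the Koszul signs produced by commuting the graded factors past one another. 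Summing over all shuffles and transferring each permutation from the operators to the time variables, Lemma~\ref{lem simplicial decomposition}(\ref{item shuffle}) collapses the sum of integrals over the pieces $\Sigma(\chi)$ into a single integral over $\Sigma^p \times \Sigma^q$, on which the now unpermuted integrand is exactly $\JLO{a^0,\dots,a^p}{s}_{\D_1}\,\JLO{b^0,\dots,b^q}{t}_{\D_2}$; Fubini then gives $\ch_{\D_1}(\alpha_1)\,\ch_{\D_2}(\alpha_2)$.

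For Part~(\ref{cyclic shuffle JLO}), I would run the same argument with the cyclic shuffle decomposition Lemma~\ref{lem simplicial decomposition}(\ref{item cyclic shuffle}). The entry $1$ that $B_r$ places at the head of the word forces every $a_i^j$, including $a_i^0$, to appear differentiated, so that under the factorization of the super-trace the slot-$i$ factor is a string in $d_i a_i^0, \dots, d_i a_i^{p_i}$. The crucial observation is that slot $i$ sees the identity at every time belonging to the head $1$ and to the other blocks; merging those heat intervals and using cyclicity of $\str_i$ to absorb the block offset $s^i$, the slot-$i$ factor reduces to $\JLO{d\alpha_i}{t_i}_{\D_i}$, which is independent of $s = (s^1,\dots,s^r) \in \Sigma^r$. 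By Lemma~\ref{lem simplicial decomposition}(\ref{item cyclic shuffle}) the sum over cyclic shuffles of the integrals over $\Sigma(\sigma)$ becomes an integral over $\Sigma^r \times \Sigma^{p_1}\times\dots\times\Sigma^{p_r}$; the $s$-independent integrand then separates, the inner integrals producing $\prod_i \int_{\Sigma^{p_i}}\JLO{d\alpha_i}{t_i}_{\D_i}\,dt_i = \prod_i B\ch_{\D_i}(\alpha_i)$ by Proposition~\ref{GJ}, while the $s$-integration contributes only the volume of $\Sigma^r$, namely $1/r!$. This is exactly (\ref{eq chBr}), and specializing to $r = 2$ gives the stated cyclic shuffle product formula.

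The main obstacle is the bookkeeping in Part~(\ref{cyclic shuffle JLO}): one must verify carefully that the slot-$i$ factor genuinely decouples from $s$, that is, that no matter how the $r$ blocks interleave in the global time order recorded by $\sigma$, the heat intervals invisible to slot $i$ merge and the offset $s^i$ cancels against the wrap-around gap by cyclicity of $\str_i$, and that the inserted $1$ produces precisely the $d$-insertion of Proposition~\ref{GJ}. Simultaneously one must track the signs: the permutation signs $(-1)^\sigma$ against the Koszul signs generated both by separating the graded tensor factors and by the cyclic rotations used to normalize each slot. Once these are pinned down, the two decompositions of Lemma~\ref{lem simplicial decomposition} do all of the geometric work.
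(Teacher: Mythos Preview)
Your proposal is correct and follows essentially the same approach as the paper: reduce to elementary tensors, use (\ref{eq product of exp}) and (\ref{eq derivation product}) to factor the JLO integrand pointwise for each (cyclic) shuffle, then reassemble via Lemma~\ref{lem simplicial decomposition} and, for part~(\ref{cyclic shuffle JLO}), invoke Proposition~\ref{GJ} and the volume $1/r!$ of $\Sigma^r$. The paper's write-up is terser---it simply asserts the pointwise factorization and declares the rest ``straightforward''---so your explicit attention to the cyclicity of $\str_i$ absorbing the offset $s^i$ and to the sign bookkeeping is exactly the verification the paper leaves to the reader.
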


Note that Remark~\ref{rem product of st} shows that the theorem is well-posed. 

\begin{proof} 
First we prove (\ref{shuffle JLO}).
Let $\alpha_{1} = (a^{0}, a^{1}, \dots, a^{p}) \in C_{p}(\A_{1})$ and $\alpha_{2} = (b^{0}, b^{1}, \dots, b^{q}) \in C_{q}(\A_{2})$ and let $\gamma = (c^{0, 0}, c^{1, 1}, \dots, c^{1, p}, c^{2, 1}, \dots, c^{2, q}) \in C_{p+q}(\A_{1} \otimes \A_{2})$ denote the element
	\begin{equation}
	 (a^{0} \otimes b^{0}, a^{1} \otimes 1, \dots, a^{p} \otimes 1, 1\otimes b^{1}, \dots, 1 \otimes b^{q}).
	\end{equation}
Then using (\ref{eq product of exp}) and (\ref{eq derivation product}), we see that 
	\begin{equation}
	e^{-t(\D_{1} \times \D_{2})^{2}}[\D_{1} \times \D_{2}, c^{i, j}] = \begin{cases}e^{-t\Delta_{1}}[\D_{1}, a^{j}] \tensor e^{-t\Delta_{2}}, & i = 1\\ e^{-t\Delta_{1}} \tensor e^{-t\Delta_{2}}[\D_{2}, b^{j}], & i = 2 \end{cases},
	\end{equation}
for $t > 0$. Now it is straightforward to check that for any $(p, q)$-shuffle $\chi$ and $(s, t) \in \Sigma(\chi) \subset \Sigma^{p} \times \Sigma^{q}$ with $u = \chi(s, t) \in \Sigma^{p+q}$,
	\begin{align}
	\JLO{\chi(\gamma)}{u}_{\D_{1} \times \D_{2}} = \JLO{\alpha_{1}}{s}_{\D_{1}}\cdot \JLO{\alpha_{2}}{t}_{\D_{2}}.
	\end{align}
Integrating over $\Sigma(\chi)$ and summing over all the $(p, q)$-shuffles $\chi$, we get the result, using Lemma~\ref{lem simplicial decomposition}(\ref{item shuffle}). 

The proof of (\ref{cyclic shuffle JLO}) is similar. Let $\gamma$ denote the element
	\begin{equation}
	(1, a_{1}^{0}, \dots, a_{1}^{p_{1}}, \dots, a_{r}^{0}, \dots, a_{r}^{p_{r}})
	\end{equation}
in $C_{r + p_{1} + \dots + p_{r}}(\A_{1} \otimes \dots \otimes \A_{r})$.	
Then for any cyclic $(p_{1}, \dots, p_{r})$-shuffle $\sigma$, 
and $(s, t_{1}, \dots, t_{r}) \in \Sigma(\sigma) \subset \Sigma^{r} \times \Sigma^{p_{1}} \times \dots \times \Sigma^{p_{r}}$ with $v = \sigma(s + t_{1} + \dots + t_{r}) \in \Sigma^{r + p_{1} + \dots + p_{r}}$, we have	
	\begin{align*}
	\JLO{\sigma(\gamma)}{v}_{\D_{1} \times \dots \times \D_{r}} = \JLO{d\alpha_{1}}{t_{1}}_{\D_{1}}\cdot \dots \cdot \JLO{d\alpha_{r}}{t_{r}}_{\D_{r}}.
	\end{align*}
Now Lemma~\ref{lem simplicial decomposition}(\ref{item cyclic shuffle}) and Proposition \ref{GJ} completes the proof. The factor $1/r!$ in (\ref{eq chBr}) is the volume of $\Sigma^{r}$.
\end{proof}

\begin{defn} Let $\AHD$ be a $\theta$-summable spectral triple. We define the {\em perturbed JLO cocycle}\index{perturbed JLO character}\index{JLO!perturbed} as
	\begin{equation}\label{eq pch}
	\pch_{\bullet} = \ch_{\bullet} + \frac{1}{\sqrt{2}}B\ch_{\bullet-1}
	\end{equation}
\end{defn}
\begin{rem}
\begin{enumerate}
\item The perturbed JLO character is a cocycle: 
	\begin{align*}(b+B)\pch_{\bullet} &= (b+B)(\ch_{\bullet} + 2^{-\half{1}}B\ch_{\bullet-1})\\
	&= 2^{-\half{1}}bB\ch_{\bullet-1}\\
	&= -2^{-\half{1}}Bb\ch_{\bullet-1}\\
	&= -2^{-\half{1}}B(b + B)\ch_{\bullet-1}\\
	&= 0.
	\end{align*}
\item The perturbed JLO character has mixed parity: $\ch_{\bullet}$ is even and $B\ch_{\bullet_{-1}}$ is odd. The pairing with $K_{0}$ depends only on the even part, hence $\ch$ and $\pch$ have the same pairing with $K_{0}(\A)$.
\end{enumerate}
\end{rem}

\begin{cor}\label{cor: multiplicativity of perturbed JLO} Let $(\A_{1}, \H_{1}, \D_{1})$ and $(\A_{2}, \H_{2}, \D_{2})$ be $\theta$-summable spectral triples. Then for $\alpha \in C_{\bullet}(\A_{1})$ and $\beta \in C_{\bullet}(\A_{2})$
	\begin{equation}
	\pch_{\D_{1} \times \D_{2}}(\alpha \times \beta + \alpha\times'\beta) = \pch_{\D_{1}}(\alpha)\cdot \pch_{\D_{2}}(\beta).
	\end{equation}
\end{cor}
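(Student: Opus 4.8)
The plan is to expand both sides of the asserted equality via the definition $\pch_{\bullet} = \ch_{\bullet} + \frac{1}{\sqrt2}B\ch_{\bullet-1}$ and the bilinearity of $\times$ and $\times'$, and then to match the four resulting scalar terms one by one, invoking Theorem~\ref{thm: A-inf and JLO} for two of them and a short Leibniz-type computation for the other two. It suffices to treat homogeneous $\alpha \in C_{p}(\A_{1})$ and $\beta \in C_{q}(\A_{2})$, the general case following by linearity. Abbreviating $\ch_{i} \coloneqq \ch_{\D_{i}}$ and $\ch_{12} \coloneqq \ch_{\D_{1} \times \D_{2}}$, the right-hand side reads
\begin{equation*}
\pch_{\D_{1}}(\alpha)\,\pch_{\D_{2}}(\beta) = \ch_{1}(\alpha)\ch_{2}(\beta) + \tfrac{1}{\sqrt2}\bigl(\ch_{1}(\alpha)B\ch_{2}(\beta) + B\ch_{1}(\alpha)\ch_{2}(\beta)\bigr) + \tfrac12 B\ch_{1}(\alpha)B\ch_{2}(\beta),
\end{equation*}
while, since $\alpha \times \beta$ and $\alpha \times' \beta$ sit in degrees $p+q$ and $p+q+2$, the left-hand side is
\begin{equation*}
\ch_{12}(\alpha \times \beta) + \tfrac{1}{\sqrt2}B\ch_{12}(\alpha \times \beta) + \ch_{12}(\alpha \times' \beta) + \tfrac{1}{\sqrt2}B\ch_{12}(\alpha \times' \beta).
\end{equation*}

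Theorem~\ref{thm: A-inf and JLO}(\ref{shuffle JLO}) identifies $\ch_{12}(\alpha \times \beta) = \ch_{1}(\alpha)\ch_{2}(\beta)$, the first term on the right, and Theorem~\ref{thm: A-inf and JLO}(\ref{cyclic shuffle JLO}) with $r = 2$ identifies $\ch_{12}(\alpha \times' \beta) = \frac12 B\ch_{1}(\alpha)B\ch_{2}(\beta)$, the last. What remains is to account for the two cross terms, that is, to prove
\begin{equation*}
B\ch_{12}(\alpha \times \beta + \alpha \times' \beta) = \ch_{1}(\alpha)B\ch_{2}(\beta) + B\ch_{1}(\alpha)\ch_{2}(\beta). \tag{$\star$}
\end{equation*}

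For $(\star)$ I would feed everything through Proposition~\ref{GJ}, which expresses $B\ch_{\D}(\eta)$ as $\int_{\Sigma^{n}}\JLO{d\eta}{t}dt$, where the symbol $d$ differentiates the $0$-th entry of $\eta$ only. The cyclic-shuffle contribution then drops out for a structural reason: by construction every summand of $\alpha \times' \beta = B_{2}(\alpha, \beta)$ carries the unit $1$ in its $0$-th slot---the cyclic shuffles $\sigma$ fix position $0$---so its differentiated $0$-th entry is $d1 = [\D_{1} \times \D_{2}, 1] = 0$, whence $B\ch_{12}(\alpha \times' \beta) = 0$. For the shuffle contribution the $0$-th entry of each summand is $a^{0} \otimes b^{0}$, and the derivation rule (\ref{eq derivation product}) gives $d(a^{0} \otimes b^{0}) = da^{0} \otimes b^{0} + a^{0} \otimes db^{0}$. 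Substituting each piece into the very factorization that proves part (\ref{shuffle JLO})---the semigroup identity (\ref{eq product of exp}) together with the simplicial decomposition of Lemma~\ref{lem simplicial decomposition}(\ref{item shuffle})---and then integrating over $\Sigma^{p} \times \Sigma^{q}$ and summing over $(p, q)$-shuffles produces $B\ch_{1}(\alpha)\ch_{2}(\beta)$ from the first piece and $\ch_{1}(\alpha)B\ch_{2}(\beta)$ from the second, once more by Proposition~\ref{GJ}. This is exactly $(\star)$, and combining it with the two theorem-terms makes the two expansions coincide.

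The only place where genuine verification is needed, as opposed to bookkeeping, is the shuffle half of $(\star)$: one must confirm that inserting the odd element $da^{0}$ (respectively $db^{0}$) into the leftmost slot remains compatible---signs and grading included---with the supertrace factorization $\str_{\H_{1} \tensor \H_{2}} = \str_{\H_{1}} \cdot \str_{\H_{2}}$ underlying part (\ref{shuffle JLO}). Because $a^{0}$ and $b^{0}$ are even and the differentiated factor occupies the $0$-th position, no Koszul signs are generated and the computation is a verbatim rerun of part (\ref{shuffle JLO}) with a single differentiated entry; the parity constraints it imposes are precisely those built into the even/odd splitting of $\ch$ and $B\ch$. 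The genuinely new observation, and the reason the $\frac{1}{\sqrt2}$-weighted cross terms reassemble into a product rather than leaving a discrepancy, is the vanishing $B\ch_{12}(\alpha \times' \beta) = 0$ forced by the inserted unit.
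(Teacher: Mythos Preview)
Your proof is correct and follows essentially the same route as the paper's. The paper streamlines the bookkeeping by introducing the chain-level operation $\delta(a^{0},\dots,a^{p}) \coloneqq \tfrac{1}{\sqrt 2}([\D,a^{0}],a^{1},\dots,a^{p})$, so that $\pch_{\D}=\ch_{\D}\circ(1+\delta)$; the two key observations---$\delta_{12}(\alpha\times'\beta)=0$ because the $0$-th slot is $1$, and the Leibniz rule $\delta_{12}(\alpha\times\beta)=\delta_{1}\alpha\times\beta+\alpha\times\delta_{2}\beta$---are exactly your $B\ch_{12}(\alpha\times'\beta)=0$ and your derivation computation for $d(a^{0}\otimes b^{0})$, after which the paper applies Theorem~\ref{thm: A-inf and JLO}(\ref{shuffle JLO}) once more rather than re-running its proof via the integral factorization.
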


\begin{proof} 
For a $\theta$-summable spectral triple $\AHD$, we write  
	\begin{equation}
	\delta(a^{0}, a^{1}\dots, a^{p}) \coloneqq \frac{1}{\sqrt{2}}([\D, a^{0}], a^{1}, \dots, a^{p}).
	\end{equation}
Note that $[\D, a^{0}]$ does {\em not} necessarily belong to $\A$, but this causes no trouble.

Then we can write
	\begin{align}
	\pch_{\D} &= \ch_{\D}\circ (1 + \delta).
	\end{align}

Now we write $\delta_{1}$, $\delta_{2}$ and $\delta_{12}$ for the $\delta$ corresponding to the spectral triples $(\A_{1}, \H_{1}, \D_{1})$, $(\A_{2}, \H_{2}, \D_{2})$ and $(\A_{1}, \H_{1}, \D_{1}) \times (\A_{2}, \H_{2}, \D_{2})$, respectively. Then, using (\ref{eq derivation product}) we see that
	\begin{align}
	\delta_{12}(\alpha \times \beta) =\delta_{1}(\alpha) \times \beta + \alpha \times \delta_{2}(\beta).
	\end{align}

Moreover $\delta_{12}(\alpha \times' \beta) = 0$ because all the summands start with the term $[\D_{1} \times \D_{2}, 1 \otimes 1] = 0$.

Therefore,
	 \begin{align*}\pch_{\D_{1} \times \D_{2}}&(\alpha \times \beta + \alpha\times'\beta)\\ &= \ch_{\D_{1} \times \D_{2}} \left((1 + \delta_{12})(\alpha \times \beta + \alpha\times'\beta)\right)\\
	 &=\ch_{\D_{1} \times \D_{2}}(\alpha \times \beta + \alpha\times'\beta + \delta_{1}(\alpha) \times \beta + \alpha \times \delta_{2}(\beta))\\
	 &= \ch_{\D_{1}}(\alpha)\ch_{\D_{2}}(\beta) + \ch_{\D_{1}}(\delta_{1}\alpha)\ch_{\D_{2}}(\delta_{2}\beta)\\
	  &\qquad+ \ch_{\D_{1}}(\delta_{1}\alpha)\ch_{\D_{2}}(\beta) + \ch_{\D_{1}}(\alpha)\ch_{\D_{2}}(\delta_{2}\beta)\\
	 &= \ch_{\D_{1}}\left((1 + \delta_{1})(\alpha)\right) \ch_{\D_{2}}\left((1 + \delta_{2})(\beta)\right)\\
	 &= \pch_{\D_{1}}(\alpha) \cdot \pch_{\D_{2}}(\beta),
	\end{align*}
by Theorem~\ref{thm: A-inf and JLO}.
\end{proof}

\begin{cor} For $theta$-summable spectral triples, the perturbed JLO character implements the diagram in Lemma~\ref{prop index map is multiplicative}.
\qed
\end{cor}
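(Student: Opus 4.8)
The plan is to read the multiplicativity of the index map off the chain-level identity of Corollary~\ref{cor: multiplicativity of perturbed JLO}, feeding it into the abstract index formula; this reproves Proposition~\ref{prop index map is multiplicative} homologically, through the JLO pairing rather than the heat-kernel computation. Two inputs are needed: that $\pch$ still computes the index, and that the homological Chern character is compatible with the exterior product.

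First I would observe that the perturbed character computes the index: by the abstract index formula $\langle \ch_\D, \ch_\ast(e)\rangle = \Ind_\D(e)$, and since $\ch_\D$ and $\pch_\D$ pair identically with $K_0(\A)$ (the odd part $\frac{1}{\sqrt{2}}B\ch_{\bullet-1}$ does not contribute to the pairing with an even class), we obtain
	\begin{equation}
	\langle \pch_\D, \ch_\ast(e)\rangle = \Ind_\D(e), \qquad e \in K_0(\A).
	\end{equation}
Second I would invoke the compatibility of $\ch_\ast$ with products: if $\alpha \in C_\bullet(\A_1)$ and $\beta \in C_\bullet(\A_2)$ are $(b+B)$-cycles representing $\ch_\ast(e_1)$ and $\ch_\ast(e_2)$, then $\alpha \times \beta + \alpha\times'\beta$ is a $(b+B)$-cycle representing the external product $\ch_\ast(e_1 \otimes e_2)$ in $\HE_0(\A_1 \otimes_\pi \A_2)$.

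Granting these, the corollary is a short computation. For $e_1 \in K_0(\A_1)$ and $e_2 \in K_0(\A_2)$ I would chain
	\begin{align*}
	\Ind_{\D_1 \times \D_2}(e_1 \otimes e_2) &= \langle \pch_{\D_1 \times \D_2}, \ch_\ast(e_1 \otimes e_2)\rangle\\
	&= \pch_{\D_1 \times \D_2}(\alpha \times \beta + \alpha\times'\beta)\\
	&= \pch_{\D_1}(\alpha)\cdot\pch_{\D_2}(\beta)\\
	&= \Ind_{\D_1}(e_1)\cdot\Ind_{\D_2}(e_2),
	\end{align*}
where the first and last equalities use the index formula (the second line evaluating the cocycle on a representing cycle, the penultimate step reading the evaluations back as pairings), and the decisive middle equality is Corollary~\ref{cor: multiplicativity of perturbed JLO}. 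This is exactly the commutativity of the square in Proposition~\ref{prop index map is multiplicative}.

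The hard part will be the second input, the product-compatibility of $\ch_\ast$. It rests on the fact that $\times + \times'$ is a chain map for $b+B$ and descends to the exterior product on $\HE_\bullet$: the shuffle $\times$ already commutes with $b$, while the cyclic shuffle $\times' = B_2$ is precisely the correction repairing the failure of $B$ to be a derivation, as noted before the definition of the $B_r$. One then checks that the induced product matches the external $K$-theory product under $\ch_\ast$, which is where the normalizations built into $\pch$ and $\times'$ must line up; the higher operations $B_r$ with $r \ge 3$ govern only homotopy-associativity and do not enter at this level. Everything else is formal.
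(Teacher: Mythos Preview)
Your proposal is correct and matches the paper's intended argument: the paper gives no proof beyond \qed, treating the statement as immediate from Corollary~\ref{cor: multiplicativity of perturbed JLO}, the remark that $\pch$ and $\ch$ pair identically with $K_{0}$, and the standard (cited) fact that $\times + \times'$ induces the exterior product on entire cyclic homology compatible with $\ch_{\ast}$. You have simply unpacked these implicit steps, and your honest flagging of the product-compatibility of $\ch_{\ast}$ as the nontrivial external input is apt --- the paper does not prove it either, relying on \cite{GJ:1990,GJP:1991}.
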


\bibliographystyle{amsalpha}
\bibliography{Biblio-Database}

\providecommand{\bysame}{\leavevmode\hbox to3em{\hrulefill}\thinspace}
\providecommand{\MR}{\relax\ifhmode\unskip\space\fi MR }
\providecommand{\MRhref}[2]{%
  \href{http://www.ams.org/mathscinet-getitem?mr=#1}{#2}
}
\providecommand{\href}[2]{#2}
\begin{thebibliography}{GJP91}

\bibitem[AS63]{AS:1963}
M.~F. Atiyah and I.~M. Singer, \emph{The index of elliptic operators on compact
  manifolds}, Bull. Amer. Math. Soc. \textbf{69} (1963), 422--433.
  \MR{MR0157392 (28 \#626)}

\bibitem[AS68a]{AS:1968}
\bysame, \emph{The index of elliptic operators. {I}}, Ann. of Math. (2)
  \textbf{87} (1968), 484--530. \MR{MR0236950 (38 \#5243)}

\bibitem[AS68b]{AS:1968b}
\bysame, \emph{The index of elliptic operators. {III}}, Ann. of Math. (2)
  \textbf{87} (1968), 546--604. \MR{MR0236952 (38 \#5245)}

\bibitem[BG94]{BG:1994}
Jonathan Block and Ezra Getzler, \emph{Equivariant cyclic homology and
  equivariant differential forms}, Ann. Sci. \'Ecole Norm. Sup. (4) \textbf{27}
  (1994), no.~4, 493--527. \MR{MR1290397 (95h:19002)}

\bibitem[CF64]{CF:1964}
P.~E. Conner and E.~E. Floyd, \emph{Differentiable periodic maps}, Ergebnisse
  der Mathematik und ihrer Grenzgebiete, N. F., Band 33, Academic Press Inc.,
  Publishers, New York, 1964. \MR{MR0176478 (31 \#750)}

\bibitem[Con88]{C:1988b}
A.~Connes, \emph{Entire cyclic cohomology of {B}anach algebras and characters
  of {$\theta$}-summable {F}redholm modules}, $K$-Theory \textbf{1} (1988),
  no.~6, 519--548. \MR{MR953915 (90c:46094)}

\bibitem[Con91]{C:1991c}
\bysame, \emph{On the {C}hern character of {$\theta$} summable {F}redholm
  modules}, Comm. Math. Phys. \textbf{139} (1991), no.~1, 171--181.
  \MR{MR1116414 (92i:19003)}

\bibitem[GJ90]{GJ:1990}
Ezra Getzler and John D.~S. Jones, \emph{{$A\sb \infty$}-algebras and the
  cyclic bar complex}, Illinois J. Math. \textbf{34} (1990), no.~2, 256--283.
  \MR{MR1046565 (91e:19001)}

\bibitem[GJP91]{GJP:1991}
Ezra Getzler, John D.~S. Jones, and Scott Petrack, \emph{Differential forms on
  loop spaces and the cyclic bar complex}, Topology \textbf{30} (1991), no.~3,
  339--371. \MR{MR1113683 (92i:58179)}

\bibitem[GS89]{GS:1989}
Ezra Getzler and Andr{\'a}s Szenes, \emph{On the {C}hern character of a
  theta-summable {F}redholm module}, J. Funct. Anal. \textbf{84} (1989), no.~2,
  343--357. \MR{MR1001465 (91g:19007)}

\bibitem[JLO88]{JLO:1988}
Arthur Jaffe, Andrzej Lesniewski, and Konrad Osterwalder, \emph{Quantum
  {$K$}-theory. {I}. {T}he {C}hern character}, Comm. Math. Phys. \textbf{118}
  (1988), no.~1, 1--14. \MR{MR954672 (90a:58170)}

\bibitem[Pal65]{P:1965}
Richard~S. Palais, \emph{Seminar on the {A}tiyah-{S}inger index theorem}, With
  contributions by M. F. Atiyah, A. Borel, E. E. Floyd, R. T. Seeley, W. Shih
  and R. Solovay. Annals of Mathematics Studies, No. 57, Princeton University
  Press, Princeton, N.J., 1965. \MR{MR0198494 (33 \#6649)}

\bibitem[RS75]{RS:1975}
Michael Reed and Barry Simon, \emph{Methods of modern mathematical physics.
  {II}. {F}ourier analysis, self-adjointness}, Academic Press [Harcourt Brace
  Jovanovich Publishers], New York, 1975. \MR{MR0493420 (58 \#12429b)}

\bibitem[Shu01]{Shu:2001}
M.~A. Shubin, \emph{Pseudodifferential operators and spectral theory}, second
  ed., Springer-Verlag, Berlin, 2001, Translated from the 1978 Russian original
  by Stig I. Andersson. \MR{MR1852334 (2002d:47073)}

\bibitem[Tho54]{T:1954}
Ren{\'e} Thom, \emph{Quelques propri\'et\'es globales des vari\'et\'es
  diff\'erentiables}, Comment. Math. Helv. \textbf{28} (1954), 17--86.
  \MR{MR0061823 (15,890a)}

\end{thebibliography}

\end{document}